\theoremstyle{change}%
\newtheorem{definition}{Definition:}[section]%
\newtheorem{proposition}[definition]{Proposition:}%
\newtheorem{theorem}[definition]{Theorem:}%
{\theorembodyfont{\rmfamily}\newtheorem{remark}[definition]{Remark:}}%
{\theorembodyfont{\rmfamily}}%
\newenvironment{proof}
  {{\bf Proof:}}
  {\qquad \hspace*{\fill} $\Box$}%
\newcommand{\fg}{\mathfrak{g}}%
\newcommand{\fn}{\mathfrak{n}}%
\newcommand{\fh}{\mathfrak{h}}%
\newcommand{\Ad}{\operatorname{Ad}}%
\newcommand{\ad}{\operatorname{ad}}%
\newcommand{\inner}{\operatorname{int}}%
\newcommand{\rme}{\mathrm{e}}%
\newcommand{\CC}{\mathcal{C}}%
\newcommand{\UC}{\mathcal{U}}%
\newcommand{\DC}{\mathcal{D}}%
\newcommand{\XC}{\mathcal{X}}%
\newcommand{\R}{\mathbb{R}}%
\begin{document}

\title{Affine systems on Lie groups and outer invariance entropy}
\author{Adriano Da Silva\footnote{Supported by Fapesp grant $n^o$ 2013/19756-8}\\Instituto de Matem\'atica,\\
Universidade Estadual de Campinas\\
Cx. Postal 6065, 13.081-970 Campinas-SP, Brasil}
\date{\today}
\maketitle

{\bf Abstract. }Affine systems on Lie groups are a generalization of linear systems. For such systems, this paper studies what happens with the outer invariance entropy introduced by Colonius and Kawan \cite{FCCK}. It is shown that, as for linear case, the outer invariance entropy is given by the sum of the positive real parts of the eigenvalues of a derivation $\mathcal{D}^*$ that is associated to the drift of the system.

\bigskip
{\bf Key words.} invariance entropy, affine systems, Lie groups

\section{Introduction}
 
The notion of outer invariance entropy was defined in \cite{FCCK} as the smallest rate of information about the state, above which a controller is able to prevent trajectories, starting in a compact subset $K$, from leaving a controlled invariant subset $Q$ of the state space. Such notion is closely related with the notion of feedback entropy defined by Nair, Evans, Mareels, and Moran \cite{NGMW}. General bounds for this new concept of entropy were established in Kawan \cite{CK2011}, \cite{CK1}, \cite{CK} but exactly formulas are quite hard to obtain. When the state space is a Lie group or a homogeneous space their intrinsic geometry helps in such task and some explicit formulas for the entropy where found in \cite{CKawan}, \cite{DaSilva} and \cite{SilKa}. In this paper we study the outer invariance entropy for affine systems on Lie groups that are a generalization of linear systems introduced in \cite{VASM} and \cite{VAJT}. As for the linear case one can associate to the drift of an affine system a derivation and show that the outer invariance entropy in this case is also given in terms of the positive real parts of the eigenvalues associated with such derivation.

The contents of this paper are the following: Section 2 introduces the definition of control systems and the outer invariance entropy. In Section 3 we define affine and linear systems on Lie groups and homogeneous spaces and prove some results for such systems. In Section 4 our main result is proved. We first obtain an upper bound in terms of the positive real part of the eigenvalues of the associated derivation. In order to get a similar lower bound we consider an induced system on a particular homogeneous space where we have an invariant measure which allows us to get rid of the eigenvalues with negative real part and obtain a formula for the outer invariance entropy. 

\section{Preliminaries}

In this Section we will introduce the concept of control systems and its outer invariance entropy. For the theory of control systems we refer to \cite{FCWK} and \cite{Son} and for more on outer invariance entropy the reader can consult \cite{FCCK}, \cite{CK2011}, \cite{CK1}, \cite{CK} and \cite{CKawan}.

\subsection{Control Systems}
Let $M$ be a $d$ dimensional smooth manifold. By a control system in $M$ we understand a family 
\begin{equation}
\label{controlsystem}
\dot{x}(t)=f(x(t), u(t)), \;\;\;\;u\in\mathcal{U}
\end{equation}
of ordinary differential equations, with a right-hand side $f: M\times\mathbb{R}^m\rightarrow TM$
satisfying $f_u:=f(\cdot, u)\in\mathcal{X}(M)$ for all $u\in\mathbb{R}^m$. For simplicity, we assume
that $f$ is smooth. 

The family $\mathcal{U}$ of admissible control functions is given by
$$\mathcal{U}=\{u:\mathbb{R}\rightarrow\mathbb{R}^m; \;\;\;u\mbox{ measurable and }u(t)\in \Omega \;\mbox{ a.e.}\}$$
where $\Omega\subset\mathbb{R}^m$ is a compact convex set called the {\bf control range} of the system such that $0\in\inner\Omega$. Smoothness of $f$ in the first argument guarantees that for each control function $u\in \mathcal{U}$ and each initial value
$x\in M$ there exists a unique solution $\phi(t, x, u)$ satisfying $\phi(0, x, u)=x$,
defined on an open interval containing $t=0$. Note that in general $\phi(t, x, u)$
is only a solution in the sense of Carath\'eodory, i.e., a locally absolutely
continuous curve satisfying the corresponding differential equation almost
everywhere. We assume w.l.o.g. that all such solutions can be extended to the
whole real line since we only consider solutions which do not leave a small neighborhood of a compact set. 
Hence, we obtain a mapping 
$$\phi:\mathbb{R}\times M\times\mathcal{U}\rightarrow M, \;\;\;(t, x, u)\mapsto\phi(t, x, u),$$
satisfying the {\bf cocycle property}
$$\phi(t+s, x, u)=\phi(t, \phi(s, x, u), \Theta_su)$$
for all $t, s\in\mathbb{R}$, $x\in M$, $u\in\mathcal{U}$ where  for $t\in\mathbb{R}$  the map $\Theta_t$ is the {\bf shift flow} on $\mathcal{U}$ defined by 
$$(\Theta_tu)(s):=u(t+s).$$
Instead of $\phi(t, x, u)$ we will usually write $\phi_{t, u}(x)$. Note that smoothness of the right-hand side $f$ implies smoothness of $\phi_{t, u}$.

\subsection{Outer Invariance Entropy}
Consider the control system (\ref{controlsystem}), and let $\varrho$ be a fixed metric on $M$ compatible with the given topology. Let $K, Q\subset M$ be nonempty sets. We say that $(K, Q)$ is an {\bf admissible pair} of (\ref{controlsystem}) if
\begin{itemize}
\item[1.] The set $K$ is compact;
\item[2.] For each $x\in K$ there exists $u\in\mathcal{U}$ such that $\phi(\mathbb{R}^+, x, u)\subset Q$ (in particular, $K\subset Q$).
\end{itemize}

For given $\tau, \varepsilon> 0$ a set $S\subset \mathcal{U}$ of control functions is called $(\tau, \varepsilon, K, Q)$-spanning if for all $x\in K$ there exists $u\in S$ with $\phi(t, x, u)\in N_{\varepsilon}(Q)$ for all $t\in[0, \tau]$. The minimal cardinality of such a set is denoted by $r_{\mathrm{inv}}(\tau, \varepsilon, K, Q)$, and the {\bf outer invariance entropy} of $(K, Q)$ is defined as 
$$h_{\mathrm{inv, out}}(K, Q; \varrho):=\lim_{\varepsilon\searrow 0}\limsup_{\tau\rightarrow\infty}\frac{1}{\tau}\ln r_{\mathrm{inv}}(\tau, \varepsilon, K, Q)$$

By Proposition 2.5 of \cite{CK2011} the outer invariance entropy is independent of uniformly equivalent metrics on $Q$. In particular, if $Q$ is a compact set the outer invariance entropy is independent of the metric and we will denote it just by $h_{\mathrm{inv, out}}(K, Q)$ without further remarks. 

Let $\dot{x}=f_1(x, u)$ and $\dot{y}=f_2(y, u)$ be control systems on $M$ and $N$ with corresponding solutions $\phi_1(t, x, u)$ and $\phi_2(t, y, u)$ and same set of admissible functions $\UC$ and let $\pi: M\rightarrow N$ be a continuous map with the semiconjugation property
$$\pi(\phi_1(t, x, u))=\phi_2(t, \pi(x), u), \;\;\;\mbox{ for all }x\in M, u\in\mathcal{U}, t\geq 0.$$
Further assume that $(K, Q)$ is an admissilble set of the system $\dot{x}=f_1(x, u)$ on $M$ such that $Q$ is a compact set. Then $(\pi(K), \pi(Q))$ is an admissible set of the system $\dot{y}=f_2(y, u)$ on $N$ and 
\begin{equation}
\label{nondecrease}
h_{\mathrm{inv, out}}(\pi(K), \pi(Q))\leq h_{\mathrm{inv, out}}(K,Q).
\end{equation}
The proof of the above can be found in  \cite{CKawan}, Proposition 2.13.

\section{Linear and Affine Systems}

In this section we introduce linear and affine systems. It is well known that the flow associated with a linear system form a one parameter group of automorphisms of $G$ and that allows us to associate a derivation of the Lie algebra to it. We will use such fact and the fact that any affine vector field has unique decompositions given by the sum of a linear vector field and right/left invariant vector field to obtain the derivation which will estimate the entropy. 

\subsection{Linear and Affine Vector Fields on Lie Groups}

Let $G$ be a connected Lie group and $\fg$ its Lie algebra. Denote by $X(G)$ the set of $\CC^{\infty}$ vector fields on $G$. The {\bf normalizer} of $\fg$ is by definition the set
$$\eta:=\mathrm{norm}_{X(G)}(\fg):=\{F\in\ X(G); \,\mbox{ for all }Y\in\fg, \;\;[F, Y]\in\fg\}$$

\begin{definition}
A vector field $F$ on $G$ is said to be {\bf affine} if it belongs to $\eta$. Moreover, if $F(e)=0$ we say that $F$ is {\bf linear}, where $e\in G$ stands for the neutral element of $G$.
\end{definition}

By the above definition, an affine vector field is such that the restriction of $\ad(F)$ to $\fg$ is a derivation of $\fg$. It is not hard to show that $\eta$ is actually a Lie subalgebra of $X(G)$ and that the map $F\mapsto\ad(F)$ is a Lie algebra morphism from $\eta$ into the set of the derivations of $\fg$.

The following result, whose proof can be found in \cite{VAJT}, Theorem 2.2, shows that an affine vector field can be writen uniquely as sum of a linear vector field and a left invariant one.

\begin{theorem}
The kernel of the map $F\mapsto \ad(F)$ is the set of left invariant vector fields. An affine vector field $F$ can be uniquely decomposed as 
$$F=\XC+Z$$
where $\XC$ is linear and $Z$ is left invariant.
\end{theorem}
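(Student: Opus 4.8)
The plan is to prove the two assertions separately, first pinning down the kernel and then using that description to obtain the decomposition. Throughout I use that $\fg$ is realized inside $X(G)$ as the algebra of right-invariant vector fields (this is the convention forced by the stated conclusion, since the flow of a right-invariant $Y$ is the left translation $L_{\exp(tY)}$), and I recall the standard fact that for complete vector fields $[F,Y]=0$ holds precisely when $F$ is invariant under the flow of $Y$. Note that all the fields involved are complete, by the blanket assumption on extendability of solutions and because invariant and linear fields on $G$ are automatically complete.

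For the kernel I would argue by double inclusion. A left-invariant field $Z$ has flow given by right translations, and right translations commute with the left translations $L_{\exp(tY)}$ that generate the flows of elements of $\fg$; hence $[Z,Y]=0$ for every $Y\in\fg$, so $Z$ lies in the kernel, and in particular $Z\in\eta$. Conversely, suppose $F\in\eta$ satisfies $\ad(F)Y=[F,Y]=0$ for all $Y\in\fg$. By the flow characterization of the vanishing bracket, $F$ is invariant under $L_{\exp(tY)}$ for every $Y\in\fg$ and every $t\in\R$. Since $G$ is connected, the elements $\exp(tY)$ generate $G$, so $F$ is invariant under every left translation, which means exactly that $F$ is left-invariant. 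This identifies the kernel with the set of left-invariant vector fields.

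For the decomposition I would first treat existence. Given an affine $F$, let $Z$ be the unique left-invariant field with $Z(e)=F(e)$ and set $\XC:=F-Z$. Because $\eta$ is a vector subspace of $X(G)$ and both $F$ and $Z$ (the latter being in the kernel) lie in $\eta$, we get $\XC\in\eta$; moreover $\XC(e)=F(e)-Z(e)=0$, so $\XC$ is linear by definition, giving $F=\XC+Z$. For uniqueness, if $F=\XC_1+Z_1=\XC_2+Z_2$ with each $\XC_i$ linear and each $Z_i$ left-invariant, then $V:=\XC_1-\XC_2=Z_2-Z_1$ is simultaneously linear, whence $V(e)=0$, and left-invariant; a left-invariant field vanishing at $e$ is identically zero, so $V=0$ and therefore $\XC_1=\XC_2$ and $Z_1=Z_2$.

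The routine parts are the algebraic bookkeeping---that $\eta$ is a subspace and that a difference of linear fields is again linear---which follow from the morphism property already recorded in the excerpt. The crux is the reverse inclusion for the kernel, namely upgrading ``$F$ commutes with every $Y\in\fg$'' to ``$F$ is left-invariant.'' This is the step where I must genuinely invoke completeness (so that the bracket--flow equivalence is available) together with the connectedness of $G$ (so that invariance under the one-parameter subgroups $\exp(tY)$ propagates to invariance under all left translations); without connectedness the characterization of the kernel would fail.
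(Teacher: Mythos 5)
Your proof is correct; note that the paper itself does not prove this theorem but defers to Ayala--Tirao (Theorem 2.2 of \cite{VAJT}), and your argument --- identifying the kernel via the bracket--flow equivalence together with connectedness of $G$, then obtaining the decomposition by subtracting the unique left-invariant field agreeing with $F$ at $e$ --- is essentially the standard proof given in that reference. The one convention you rely on, namely that $\fg$ is realized inside $X(G)$ as the right-invariant vector fields (so that the flows of its elements are left translations), is indeed the paper's convention, and your uniqueness step correctly reduces to the fact that a left-invariant field vanishing at $e$ is identically zero.
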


Let $X\in\fg$ be a right invariant vector field. If we denote by $i$ the inversion in $G$ we have that the vector field $i_*X$ is a left invariant vector field and it is equal to $-X$ at the neutral element $e\in G$. Therefore, the vector field $\XC=X+i_*X$ is a linear vector field. 

\bigskip
Under the assumption that $G$ is connected, it was proven in \cite{JPh} that we can also decompose an affine vector field $F\in\eta$ uniquely as $F=\XC^*+Y$ where $\XC^*$ is a linear vector field and $Y$ a right invariant vector field. The relation between both decompositions is given by 
$$Y=-i_*Z\;\;\mbox{ and }\;\;\XC^*=\XC+Z+i_*Z.$$
One should note also that, unless $\ad(i_*Z)\equiv 0$, we have
$$\ad(\XC^*)=\ad(\XC)+\ad(i_*Z)\neq \ad(\XC)=\ad(F)$$

Concerning linear vector fields, Theorem 1 of \cite{JPh} gives us the following equivalences:
\begin{itemize}
\item[(i)] $\XC$ is linear;
\item[(ii)] The flow of $\XC$ is a one parameter group of automorphisms;
\item[(iii)] $\XC$ verifies
$$\XC(gh)=(dL_g)_h\XC(h)+(dR_h)_g\XC(g), \;\;\mbox{ for any }\;\;g, h\in G.$$
\end{itemize}
Let us denote by $\DC$ the derivation of the Lie algebra $\fg$ given by $\mathcal{D}Y=-[\mathcal{X}, Y], \mbox{ for all }Y\in\mathfrak{g}$, that is, $\mathcal{D}=-\mathrm{ad}(\mathcal{X})$. For all $t\in\mathbb{R}$ and $Y\in\fg$ we have that
\begin{equation}
\label{flow}
(d\psi_t)_e=\mathrm{e}^{t\mathcal{D}} \mbox{ and that }\psi_t(\exp Y)=\exp(\mathrm{e}^{t\mathcal{D}}Y)
\end{equation}
where $\psi_t$ stands for the flow of $\XC$.

For an affine vector field $F$ we have that its associated flow is complete (see \cite{JPh}, Proposition 3) and, if $\alpha_t(e)$ stands for the solution of $F$ at $e\in G$, we have that
\begin{equation}
\label{expressionslinear}
\alpha_t(g)=R_{\alpha_t(e)}(\psi_t(g))=L_{\alpha_t(e)}(\psi^*_t(g))
\end{equation}
where $\psi_t$ and $\psi^*_t$ are the flows of $\XC$ and $\XC^*$, respectively. In fact, denote by $\zeta(t)$ the curve $R_{\alpha_t(e)}(\psi_t(g))$. Then
$$\frac{d}{dt}\zeta(t)=(dL_{\psi_t(g)})_{\alpha_t(e)}\frac{d}{dt}\alpha_t(e)+(dR_{\alpha_t(e)})_{\psi_t(g)}\frac{d}{dt}\psi_t(g)$$
$$=(dL_{\psi_t(g)})_{\alpha_t(e)}F(\alpha_t(e))+(dR_{\alpha_t(e)})_{\psi_t(g)}\XC(\psi_t(g))$$
$$=Z(\zeta(t))+(dL_{\psi_t(g)})_{\alpha_t(e)}\XC(\alpha_t(e))+(dR_{\alpha_t(e)})_{\psi_t(g)}\XC(\psi_t(g))$$
$$=Z(\zeta(t))+\XC(\zeta(t))=F(\zeta(t)),$$
where we used above the decomposition $F=\XC+Z$. By unicity of solutions we have that $\alpha_t(g)=R_{\alpha_t(e)}(\psi_t(g))$. By using the decomposition $F=\XC^*+Y$ one can show in the same way that $\alpha_t(g)=L_{\alpha_t(e)}(\psi^*_t(g))$. 

\begin{remark}
We should note that the above gives us in particular that
$$\psi_t=C_{\alpha_t(e)}\circ\psi^*_t.$$
\end{remark}

\subsection{Linear and Affine Systems on Lie Groups}

 An {\bf affine system} on a Lie group $G$ is a control system of the form
\begin{equation}
\label{affinesystem}
\dot{g}(t)=F(g(t))+\sum_{j=1}^mu_j(t)X_j(g(t)), \;\;\;\;u=(u_1, \ldots, u_m)\in\UC
\end{equation}
where the drift vector field $F$ is affine and $X_j$ are right invariant vector fields. We say that the above system is a {\bf linear system} if $F$ is a linear vector field. 

The usual example of an affine system is the one on $\mathbb{R}^n$ given by
$$\dot{x}(t)=Ax(t)+Bu(t)+Y; \;\;A\in\R^{n\times n}, B\in\R^{n\times m}, Y\in\R^n.$$
Since the right (left) invariant vector fields on the abelian Lie algebra $\mathbb{R}^n$ are given by constant vectors we have
$$\dot{x}(t)=F(x(t))+\sum_{i=1}^mu_i(t)b_i, \hspace{.5cm} B=(b_1| b_2| \cdots|b_m)$$
where $F(x):=Ax+Y$, that is, it is an affine system in the sense of (\ref{affinesystem}). 
 
\bigskip
For a given affine vector field $F\in\eta$ we can consider the associated system
\begin{equation}
\label{linearsystem}
\dot{g}(t)=\XC(g(t))+\sum_{i=1}^mu_i(t)X_i(g(t))
\end{equation}
where $F=\XC+Z$ with $\XC$ linear and $Z$ left invariant. We have then the following result concerning the solutions of an affine system.

\begin{proposition}
\label{rightinv}
For given $u\in\mathcal{U}$, $t\in\mathbb{R}$ and $g\in G$ the solutions of the affine system (\ref{affinesystem}) satisfies
$$\phi(t, g, u)=\phi_{t, u}\alpha_t(g)=L_{\phi_{t, u}}(\alpha_t(g))$$
where $\phi_{t, u}$ is the solution of the linear system (\ref{linearsystem}) associated to $F$ starting at the neutral element $e\in G$.

\end{proposition}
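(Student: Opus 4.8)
The plan is to follow the same variation-of-constants strategy already displayed in the excerpt for the drift alone, now carrying the control terms along. Concretely, I would fix $u\in\UC$ and $g\in G$, write $a(t):=\phi_{t,u}$ for the solution of the linear system (\ref{linearsystem}) starting at $e$ and $b(t):=\alpha_t(g)$ for the solution of the affine drift $F$ starting at $g$, and define the candidate curve
$$\zeta(t):=L_{a(t)}(b(t))=a(t)\,b(t).$$
At $t=0$ we have $a(0)=e$ and $b(0)=g$, so $\zeta(0)=g$, which matches the initial condition of (\ref{affinesystem}). The entire argument then reduces to checking that $\zeta$ solves the affine system for the control $u$; by uniqueness of Carath\'eodory solutions this forces $\phi(t,g,u)=\zeta(t)$, which is exactly the claimed identity $\phi(t,g,u)=\phi_{t,u}\,\alpha_t(g)=L_{\phi_{t,u}}(\alpha_t(g))$.

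To verify that $\zeta$ solves (\ref{affinesystem}), I would differentiate the product $\zeta(t)=a(t)b(t)$ using the Leibniz rule for the multiplication map,
$$\frac{d}{dt}\zeta(t)=(dR_{b(t)})_{a(t)}\dot a(t)+(dL_{a(t)})_{b(t)}\dot b(t),$$
and then insert the two governing equations. Since $a$ solves the linear system, $\dot a(t)=\XC(a(t))+\sum_i u_i(t)X_i(a(t))$; since $b$ solves the drift and $F=\XC+Z$, $\dot b(t)=\XC(b(t))+Z(b(t))$. This breaks $\dot\zeta(t)$ into four terms, which I would match against $F(\zeta(t))+\sum_i u_i(t)X_i(\zeta(t))$ by invoking the three structural facts at hand: the linear-vector-field identity (iii), namely $\XC(a b)=(dL_a)_b\XC(b)+(dR_b)_a\XC(a)$, recombines the two $\XC$-contributions into $\XC(\zeta(t))$; right invariance of each $X_i$ gives $(dR_{b(t)})_{a(t)}X_i(a(t))=X_i(a(t)b(t))=X_i(\zeta(t))$; and left invariance of $Z$ gives $(dL_{a(t)})_{b(t)}Z(b(t))=Z(a(t)b(t))=Z(\zeta(t))$. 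Collecting the pieces yields $\dot\zeta(t)=\XC(\zeta(t))+Z(\zeta(t))+\sum_i u_i(t)X_i(\zeta(t))=F(\zeta(t))+\sum_i u_i(t)X_i(\zeta(t))$, as required.

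I do not expect a genuine obstacle: the computation is structurally the one already carried out in the excerpt for $R_{\alpha_t(e)}(\psi_t(g))$, with the right-invariant control fields appended. The points requiring care are purely bookkeeping, namely keeping the base points of the differentials $dL$ and $dR$ straight and pairing each field with the correct invariance property, that is, right invariance and (iii) for the left factor carrying $\XC$ and the $X_i$, and left invariance of $Z$ for the right factor. The real content is the \emph{choice} of $\zeta$, which is the natural ansatz once one isolates the controlled linear dynamics from $e$ on the left and lets the left factor absorb exactly the $\XC$ and control contributions while the left-invariant remainder $Z$ of the drift is reproduced automatically by the left translation acting on $b(t)=\alpha_t(g)$.
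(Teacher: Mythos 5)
Your proposal is correct and follows essentially the same route as the paper: the same ansatz $\zeta(t)=\phi_{t,u}\,\alpha_t(g)$, the same Leibniz-rule differentiation of the product, the same use of property (iii) for $\XC$, right invariance of the $X_i$, left invariance of $Z$ in the decomposition $F=\XC+Z$, and uniqueness of solutions to conclude. No gaps.
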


\begin{proof}
Let us consider the curve $\zeta(t)$ given by 
$$\zeta(t)=\phi_{t, u}\alpha_t(g).$$
We have that $\zeta(0)=g$ and 
$$\dot{\zeta}(t)=(dL_{\phi_{t, u}})_{\alpha_t(g)}\frac{d}{dt}\alpha_t(g)+(dR_{\alpha_t(g)})_{\phi_{t, u}}\frac{d}{dt}\phi_{t, u}=$$
$$=(dL_{\phi_{t, u}})_{\alpha_t(g)}F(\alpha_t(g))+(dR_{\alpha_t(g)})_{\phi_{t, u}}\biggl\{\XC(\phi_{t, u})+\sum_{j=1}^mu_j(t)X_j(\phi_{t, u})\biggr\}$$
$$=Z(\zeta(t))+\sum_{j=1}^mu_j(t)X_j(\zeta(t))$$
$$+\,\biggl\{(dL_{\phi_{t, u}})_{\alpha_t(g)}\XC(\alpha_t(g))+(dR_{\alpha_t(g)})_{\phi_{t, u}}\XC(\phi_{t, u})\biggr\}$$
and since $\XC$ is linear we have that
$$(dL_{\phi_{t, u}})_{\alpha_t(g)}\XC(\alpha_t(g))+(dR_{\alpha_t(g)})_{\phi_{t, u}}\XC(\phi_{t, u})=\XC(\phi_{t, u}\alpha_t(g))=\XC(\zeta(t))$$
which gives us  
$$\dot{\zeta}(t)=\XC(\zeta(t))+Z(\zeta(t))+\sum_{j=1}^mu_j(t)X_j(\zeta(t))=F(\zeta(t))+\sum_{j=1}^mu_j(t)X_j(\zeta(t))$$
and by unicity that $\zeta(t)=\phi(t, g, u)$ showing the desired.

\end{proof}


\subsection{Linear Systems on Homogeneous Spaces}

Let $H$ be a closed connected Lie subgroup of the Lie group $G$ and consider the homogeneous space $G/H$ and $\pi:G\rightarrow G/H$ the canonical projection.

For a given linear vector field $\mathcal{X}$ on $G$, we want to assure the existence of a vector field on $G/H$ that is $\pi$-related to $\mathcal{X}$. Such a vector field exists if, and only if,
$$\mbox{ for all }x\in G \mbox{ and } y\in H, \hspace{1cm} \pi(\psi_t(xy))=\pi(\psi_t(x)).$$
But $\pi(\psi_t(xy))=\psi_t(x)\psi_t(y)H$ and the preceding condition is equivalent to 
$$\mbox{ for all }y\in H, t\in\mathbb{R}\hspace{1cm} \psi_t(y)\in H.$$
Thus $\mathcal{X}$ is $\pi$-related to a vector field $f$ on $G/H$ if, and only if, $H$ is invariant under the flow of $\mathcal{X}$. Moreover, since $H$ is connected we have by (\ref{flow}) that $H$ is invariant by the flow of $\XC$ if, and only if, its Lie algebra $\fh$ is $\DC$-invariant.

For a given $\mathcal{D}$-invariant Lie subalgebra $\mathfrak{h}$ of $\mathfrak{g}$, let $H\subset G$ be the connect Lie subgroup with Lie algebra $\mathfrak{h}$ and assume that $H$ is closed. Since $\mathcal{X}$ and $f=(d\pi)_*\mathcal{X}$ are $\pi$-related, its respective solutions $\psi_t$ and $\Psi_t$ satisfies 
$$\Psi_t\circ\pi=\pi\circ\psi_t,\;\;\;t\in\mathbb{R}.$$

\begin{definition}
Let $G$ be a connected Lie group and $H\subset G$ a closed subgroup. A vector field $f$ on a homogeneous space $G/H$ is said to be {\bf affine} if there is $F\in\eta$ such that $F$ and $f$ are $\pi$-related, where $\pi:G\rightarrow G/H$ is the canonical projection.
\end{definition}

Using then the decomposition $F=\XC^*+Y$ with $\XC^*$ linear and $Y$ right invariant we have the following result (see Proposition 5 of \cite{JPh}).

\begin{proposition}
Let $H\subset G$ be a closed subgroup. Then $F$ and $f:=(d\pi)F$ are $\pi$-related if, and only if, $\XC^*$ and $f^*:=(d\pi)\XC^*$ are $\pi$-related, hence if, and only if, $\fh$ is invariant under $\DC^*$.
\end{proposition}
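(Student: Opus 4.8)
The plan is to prove the chain of equivalences
$$F \text{ and } f \text{ are } \pi\text{-related} \iff \XC^* \text{ and } f^* \text{ are } \pi\text{-related} \iff \fh \text{ is } \DC^*\text{-invariant}.$$
The second equivalence is essentially already available: since $\XC^*$ is a linear vector field with associated derivation $\DC^* = -\ad(\XC^*)$, the discussion preceding this proposition (applied verbatim with $\XC^*$ in place of $\XC$) shows that $\XC^*$ descends to a $\pi$-related vector field on $G/H$ if and only if $H$ is invariant under the flow $\psi^*_t$ of $\XC^*$, and since $H$ is connected this holds if and only if $\fh$ is $\DC^*$-invariant. So the real content is the first equivalence, and I would focus my effort there.

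First I would unwind what $\pi$-relatedness of $F$ means in terms of the affine flow $\alpha_t$. By the same reasoning used earlier for linear fields, $F$ is $\pi$-related to a vector field $f$ on $G/H$ if and only if the flow $\alpha_t$ of $F$ preserves the fibers of $\pi$, i.e. $\pi(\alpha_t(xy)) = \pi(\alpha_t(x))$ for all $x \in G$, $y \in H$, $t \in \R$. The key tool is the decomposition from (\ref{expressionslinear}), namely $\alpha_t(g) = L_{\alpha_t(e)}(\psi^*_t(g))$. Substituting this in, the fiber-preservation condition for $F$ becomes
$$\pi\bigl(\alpha_t(e)\,\psi^*_t(xy)\bigr) = \pi\bigl(\alpha_t(e)\,\psi^*_t(x)\bigr) \quad\text{for all } x\in G,\ y\in H,\ t\in\R.$$
Left translation by the fixed element $\alpha_t(e)$ is a diffeomorphism of $G$, but more to the point it commutes with the right cosets in the sense that $\pi(a g_1) = \pi(a g_2)$ holds for all $a$ precisely when $\pi(g_1) = \pi(g_2)$ (since $g_1 H = g_2 H$ is equivalent to $a g_1 H = a g_2 H$). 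Hence the displayed condition is equivalent to $\pi(\psi^*_t(xy)) = \pi(\psi^*_t(x))$ for all $x,y,t$, which is exactly the fiber-preservation condition for the linear field $\XC^*$.

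This gives the first equivalence cleanly, and chaining it with the standard linear result (the second equivalence) completes the proof. I expect the main obstacle to be a careful justification of the step that lets me cancel the common left translation by $\alpha_t(e)$: one must verify that left multiplication by an arbitrary fixed group element descends to a well-defined bijection on the coset space $G/H$ respecting the projection $\pi$, so that applying it or its inverse neither creates nor destroys coincidences of cosets. This is an elementary property of homogeneous spaces, but it is the hinge of the whole argument and deserves to be stated explicitly rather than waved through. Everything else reduces to substituting the formula (\ref{expressionslinear}) and invoking the already-established correspondence between flow-invariance of $H$ and $\DC^*$-invariance of $\fh$.
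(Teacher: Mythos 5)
Your proposal is correct. Note that the paper itself does not prove this proposition; it simply cites Proposition 5 of \cite{JPh}, so there is no in-text argument to compare against. Your route --- substituting $\alpha_t(g)=L_{\alpha_t(e)}(\psi^*_t(g))$ from (\ref{expressionslinear}) into the fiber-preservation criterion and cancelling the common left translation by $\alpha_t(e)$ --- is sound, and the ``hinge'' you flag is indeed elementary: $ag_1H=ag_2H\iff g_1H=g_2H$, i.e.\ left translations descend to well-defined bijections of $G/H$. An equivalent and slightly more economical way to get the first equivalence, closer in spirit to Jouan's infinitesimal setup, is to use the decomposition $F=\XC^*+Y$ directly: a right-invariant field $Y$ always descends to $G/H$ (its flow consists of left translations $L_{\exp(tY)}$, which respect the fibration), so $F=\XC^*+Y$ is $\pi$-projectable if and only if $\XC^*$ is. Both arguments rest on the same fact about left translations; yours works at the level of flows, the alternative at the level of vector fields. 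One small point worth making explicit in your write-up: the criterion ``$F$ descends iff its flow preserves fibers'' uses completeness of $\alpha_t$, which the paper guarantees by citing Proposition 3 of \cite{JPh}, and the final step ($H$ flow-invariant iff $\fh$ is $\DC^*$-invariant) uses that $H$ is connected, consistent with the standing assumption of that subsection.
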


Let then $H$ to be a closed Lie subgroup that is invariant by the flow of $\XC^*$. The {\bf induced affine system} on $G/H$ is given by
\begin{equation}
\label{inducedaffine}
\dot{x}(t)=f(x(t))+\sum_{i=1}^mu_i(t)f_i(x(t))
\end{equation}
where $f:=(d\pi)F$, $f_i=(d\pi)_*X_i$ for $i=1, \ldots m$ and $u=(u_1, \ldots, u_m)\in\mathcal{U}$. 

Since the systems (\ref{affinesystem}) and (\ref{inducedaffine}), are $\pi$-related, we have that 
$$\pi(\phi(t, g, u))=\Phi(t, \pi(g), u), \;\;\; t\in\mathbb{R}, g\in G, u\in\mathcal{U}$$
where $\Phi$ is the solution of (\ref{inducedaffine}) above. Moreover, if gor any $g\in G$ we denote by $\mathcal{L}_g$ the translation on $G/H$, we have that
$$\Phi(t, x, u)=\pi(\phi_{t, u}\cdot\alpha_t(g))=\phi_{t, u}\cdot\pi(\alpha_t(g))=\mathcal{L}_{\phi_{t, u}}(\Lambda_t(x)),$$
for any $ t\in\mathbb{R}, \,x=gH\in G/H$ and $u\in\mathcal{U}$, where $\phi_{t, u}$ is as before the solution of the linear system (\ref{linearsystem}) at the neutral element $e$ of $G$ and $\Lambda_t(x)=\pi(\alpha_t(g))$ is the flow associated to the affine vector field $f$ on $G/H$.

\section{Upper and Lower Bounds}

It was shown in \cite{DaSilva} that the invariance entropy of any admissible pair $(K, Q)$ of a linear system, with $Q$ compact coincides with the sum of the real parts of the eigenvalues of the associated derivation. Our goal now is show that the same holds for the outer invariant entropy of the affine system (\ref{linearsystem}). 

\subsection{Upper Bound}

Before give an upper bound for the outer invariance entropy we will need the notion of topological entropy. 

Let $(X, d)$ be a metric space and $\varphi:\mathbb{R}\times X\rightarrow X$ be a flow over $X$. For a given compact set $K\subset X$ and $\varepsilon, \tau>0$ we say that a set $S_{\mathrm{top}}\subset K$ is $(\tau, \varepsilon)$-spanning set for $K$ if, for every $y\in K$ there exist $x\in S_{\mathrm{top}}$ such that 
$$\varrho(\varphi_t(x), \varphi_t(y))<\varepsilon\;\;\;\mbox{ for all } \;\;\;t\in [0, \tau].$$
If we denote by $r_{\tau}(\varepsilon, K)$ the minimal cardinality of a spanning set, the topolo\-gical entropy of $\varphi$ over $K$ is defined as
$$h_{\mathrm{top}}(\varphi, K; \varrho):=\lim_{\varepsilon\searrow 0}\limsup_{\tau\rightarrow\infty}\frac{1}{\tau}r_{\tau}(\varepsilon, K)$$
and the {\bf topological entropy} of $\varphi$ as
$$h_{\mathrm{top}}(\varphi; \varrho):=\sup_{K \mbox{\scriptsize{compact}}}h_{\mathrm{top}}(\varphi, K; \varrho).$$

As for the outer invariance entropy, the topological entropy $h_{\mathrm{top}}(\varphi; \varrho)$ is independent of uniformly equivalent metrics. 

The topological entropy of a flow $\varphi$ coincedes with the time one map $\varphi_1$, that is, $h_{\mathrm{top}}(\varphi; \varrho)=h_{\mathrm{top}}(\varphi_1; \varrho)$ (see for instance Lemma 2.1 in \cite{FCCK}). If $\varphi$ is a flow of automorphisms on a Lie group $G$, we have by Corollary 16 and Proposition 10 of \cite{RB}  that
$$h_{\mathrm{top}}(\varphi; \varrho_L)=h_{\mathrm{top}}(\varphi_1; \varrho_L)=\sum_{|\beta|>1}\log |\beta|$$
where $\beta$ are the eigenvalues of $(d\varphi_1)_e$ and $\varrho_L$ is a left invariant metric of $G$.

We have then the following Theorem.

\begin{theorem}
\label{upperouter}
Let $(K, Q)$ be an admissible pair of the affine system (\ref{affinesystem}) on $G$ and let $\varrho_L$ be a left invariant metric on $G$. Then, the outer invariant entropy satisfies
$$h_{\mathrm{inv, out}}(K, Q; \varrho_L)\leq \sum_{\lambda_{\DC^*}> \;0}\lambda_{\DC^*}$$
where $\lambda_{\DC^*}$ are the real parts of the eigenvalues of the derivation $\DC^*=-\ad(\XC^*)$ for the decomposition $F=\XC^*+Y$, with $\XC^*$ linear and $Y$ right invariant. 
\end{theorem}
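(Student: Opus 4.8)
The plan is to reduce the outer invariance entropy of the affine system to the topological entropy of a flow of automorphisms, and then invoke the cited formula expressing that topological entropy as $\sum_{|\beta|>1}\log|\beta|$. The key structural fact I would exploit is the decomposition of solutions established earlier: by Proposition \ref{rightinv} (using the $F=\XC+Z$ decomposition) the solution of the affine system factors as $\phi(t,g,u)=L_{\phi_{t,u}}(\alpha_t(g))$, and by \eqref{expressionslinear} the affine drift flow factors as $\alpha_t(g)=L_{\alpha_t(e)}(\psi^*_t(g))$, where $\psi^*_t$ is the flow of the linear field $\XC^*$ and $(d\psi^*_t)_e=\rme^{t\DC^*}$. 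Composing these, the left translations absorb into a single group element, so that the geometry of how trajectories spread is governed entirely by the automorphism flow $\psi^*_t$, whose contraction/expansion rates are exactly the real parts $\lambda_{\DC^*}$ of the eigenvalues of $\DC^*$. Since $\varrho_L$ is left invariant, these left translations are isometries and do not affect spanning estimates.

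\textbf{The main construction} I would carry out is the following. Fix $\ep>0$ and $\tau>0$. Starting from any point $x\in K$, I want to find a small number of control functions that keep the trajectories inside $N_\ep(Q)$ on $[0,\tau]$. The idea is to use one fixed \emph{reference} control $u^*$ (for instance the one guaranteeing $\phi(\R^+,x_0,u^*)\subset Q$ for a central point, which exists by admissibility) and to estimate how far a nearby initial condition $y$ drifts from the reference trajectory. Because the two trajectories starting at $x$ and $y$ under the \emph{same} control differ, after stripping the left translations, only by the action of $\psi^*_t$, their $\varrho_L$-distance at time $t$ is controlled by the operator norm of $\rme^{t\DC^*}$ restricted to the relevant directions. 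The directions that expand are precisely the eigenspaces with $\lambda_{\DC^*}>0$, contributing total expansion rate $\sum_{\lambda_{\DC^*}>0}\lambda_{\DC^*}$; the nonpositive directions stay bounded and cost nothing asymptotically. Covering $K$ by $\rme^{(\sum_{\lambda_{\DC^*}>0}\lambda_{\DC^*}+\delta)\tau}$ many $\ep$-balls in these expanding directions and assigning to each ball one admissible control, I obtain a $(\tau,\ep,K,Q)$-spanning set of that cardinality, whence the bound after taking $\frac1\tau\ln(\cdot)$, $\tau\to\infty$, and $\ep\searrow0$.

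\textbf{A cleaner route}, which I would prefer, is to pass through the topological-entropy machinery already assembled before the theorem. Consider the flow of automorphisms $\psi^*_t$ on $G$; its differential at $e$ is $\rme^{t\DC^*}$, so its eigenvalues $\beta$ satisfy $|\beta|=\rme^{t\lambda_{\DC^*}}$ and $\sum_{|\beta|>1}\log|\beta| = t\sum_{\lambda_{\DC^*}>0}\lambda_{\DC^*}$ for the time-$t$ map, giving $h_{\mathrm{top}}(\psi^*;\varrho_L)=\sum_{\lambda_{\DC^*}>0}\lambda_{\DC^*}$ by the cited Corollary 16 and Proposition 10 of \cite{RB}. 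The remaining task is a general comparison lemma: for a system whose solutions factor as a left translation (an isometry for $\varrho_L$) composed with a fixed flow $\psi^*_t$, any topologically $(\tau,\ep)$-spanning set for $\psi^*$ over a compact set containing $K$ furnishes, via a single admissible control steering the reference trajectory inside $Q$, a $(\tau,\ep,K,Q)$-spanning set of the same cardinality. This yields $h_{\mathrm{inv,out}}(K,Q;\varrho_L)\le h_{\mathrm{top}}(\psi^*;\varrho_L)$.

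\textbf{The hard part} is not the entropy arithmetic but verifying that the left translations genuinely decouple the spanning problem into an isometric part and the automorphism flow, \emph{uniformly} over the compact set $K$ and with the perturbation staying within the $\ep$-neighborhood $N_\ep(Q)$. Concretely, one must show that a single well-chosen control function (independent of the precise initial point within a small ball) keeps an entire cluster of trajectories inside $N_\ep(Q)$; this requires a quantitative continuous-dependence estimate for the affine flow and the left invariance of $\varrho_L$ to ensure the error introduced by the translations vanishes. Controlling the nonexpanding eigendirections — guaranteeing they contribute no exponential growth so that only $\lambda_{\DC^*}>0$ survives — is the technical crux of turning the heuristic into a rigorous covering count.
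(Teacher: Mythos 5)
Your ``cleaner route'' is essentially the paper's proof: factor the solutions via Proposition \ref{rightinv} and (\ref{expressionslinear}), use left invariance of $\varrho_L$ to reduce the distance between two same-control trajectories to the distance between the corresponding $\psi^*_t$-orbits, convert a topological $(\tau,\varepsilon)$-spanning set of $K$ for $\psi^*$ into a $(\tau,\varepsilon,K,Q)$-spanning set by assigning to each spanning point the control furnished by admissibility, and finish with Bowen's formula for the automorphism flow. The only comment is that the ``hard part'' you flag dissolves: the decoupling is the exact identity $\varrho_L(\phi_{t,u}(g'),\phi_{t,u}(g))=\varrho_L(\psi^*_t(g'),\psi^*_t(g))$ valid for every control $u$, so no quantitative continuous-dependence estimate or ball-by-ball covering count is needed.
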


\begin{proof}
By proposition \ref{rightinv} and the left invariance of the metric we have that two solutions $\phi_{t, u}(g)$ and $\phi_{t, u}(g')$, satisfies
$$\varrho_L(\phi_{t, u}(g'), \phi_{t, u}(g))=\varrho_L(\phi_{t, u}\cdot\alpha_t(g'), \phi_{t, u}\cdot\alpha_t(g))=\varrho_L(\alpha_t(g'), \alpha_t(g)).$$

We have by (\ref{expressionslinear}) that $\alpha_t(g)=L_{\alpha_t(e)}(\psi_t^*(g))$ which give us also by left invariance of the metric
$$\varrho_L(\alpha_t(g'), \alpha_t(g))=\varrho_L(\alpha_t(e)\cdot\psi^*_t(g'), \alpha_t(e)\cdot\psi^*_t(g))=\varrho_L(\psi^*_t(g'), \psi^*_t(g)).$$

Let then $S_{\mathrm{top}}\subset K$ be a minimal $(\tau, \varepsilon)$-spanning set for $K$ of the flow $\psi^*_t$, that is, for all $g'\in K$ there exists $g\in S_{\mathrm{top}}$ such that
$$\varrho_L(\psi^*_t(g), \psi^*_t(g'))<\varepsilon \,\,\,\,\,\mbox{ for all }t\in[0, \tau].$$
Since $(K, Q)$ is admissible and $S_{\mathrm{top}}\subset K$ there exists, for each $g\in S_{\mathrm{top}}$, $u_{g}\in\UC$ such that $\phi_{t, u_g}(g)\in Q$ for all $t\in\mathbb{R}$. Then for all $g'\in K$, there exists $u_g$ such that 
$$\varrho_L(\phi_{t, u_g}(g'), \phi_{t, u_g}(g))=\varrho_L(\phi_{t, u_g}\cdot\,\alpha_t(g'), \phi_{t, u_g}\cdot\,\alpha_t(g))$$
$$=\varrho_L(\alpha_t(g), \alpha_t(g))=\varrho_L(\psi^*_t(g), \psi^*_t(g))<\varepsilon$$
for all $t\in[0, \tau]$, that is, $\phi_{t, u_g}(g')\in N_{\varepsilon}(Q)$ showing that $\{u_g; \; g\in S_{\mathrm{top}}\}$ is a $(\tau, \varepsilon)$-spanning set for $(K, Q)$ and implying that 
$$h_{\mathrm{inv, out}}(K, Q; \varrho_L)\leq h_{\mathrm{top}}(K, \psi^*; \varrho_L) \leq h_{\mathrm{top}}(\psi^*; \varrho_L)=h_{\mathrm{top}}(\psi^*_1; \varrho_L).$$

Since $\psi^*_1$ is an automorphism we have
$$h_{\mathrm{top}}(\psi_1^*; \varrho_L)=\sum_{\beta; \,|\beta|>1}\log|\beta|$$
where $\beta$ are the eigenvalues of $(d\psi^*_1)_e$. Moreover, since 
$$(d\psi^*_1)_e=\mathrm{e}^{\DC^*}$$
we have that the eigenvalues of $(d\psi^*_1)_e$ are given by the exponential of the eigenvalues of $\DC^*$ and consequently $|\beta|=\mathrm{e}^{\lambda_{\DC^*}}$, where $\lambda_{\DC^*}$ is the real part of an eigenvalue of $\DC^*$. Then
$$h_{\mathrm{top}}(\psi^*_1; \varrho_L)=\sum_{\beta; \,|\beta|>1}\log|\beta|=\sum_{\lambda_{\DC^*}>0}\lambda_{\DC^*}$$
showing the Theorem.

\end{proof}

\subsection{Lower Bound}

In order to obtain the lower bound, we need to get rid of the eigenvalues of $\DC^*$ with negative real parts.

Consider then the generalized eigenspaces associated with the derivation $\DC^*$ given by 
$$\fg_{\beta}=\{X\in\mathfrak{g}; (\DC^*-\beta)^nX=0 \mbox{ for some }n\geq 1\}$$
where $\beta$ is an eigenvalue of $\DC^*$. By Proposition 3.1 of \cite{SM} we have for two eigenvalues $\gamma, \beta$ of $\DC^*$ that 
\begin{equation}
\label{eigenvalues}
[\fg_{\gamma}, \fg_{\beta}]\subset\fg_{\gamma+\beta}
\end{equation}
if $\gamma+\beta$ is an eigenvalue of $\DC^*$ and zero otherwise.

With the above, we have that
$$\fg=\fg^{+, \,0}\oplus\fn$$
where $\fg^{+, \,0}$ and $\fn$ are Lie subalgebras, with $\fn$ nilpotent, defined as 
$$\fg^{+, \,0}=\bigoplus_{\beta; \,\mathrm{Re}(\beta)\geq 0}\fg_{\beta}\hspace{1cm} \mbox{ and }\hspace{1cm}\fn=\bigoplus_{\beta; \,\mathrm{Re}(\beta)<0}\fg_{\beta}.$$
Moreover, Proposition 2.9 of \cite{Sil2} assures that the connected nilpotent subgroup $N$ with Lie algebra $\fn$ is closed in $G$.

\bigskip

A measure $\mu$ on a homogeneous space $G/H$ is said to be a {\bf $G$-invariant} Borel measure if for any $g\in G$ and any Borel set $A$ of  $G/H$ we have that $\mu(\mathcal{L}_g(A))=\mu(A)$. It is a well known fact (cf. \cite{Knapp} Theorem 8.36) that when such measure exists we have, for any continuous function $f$ with compact support, that
$$\int_Gf(g)d_G(g)=\int_{G/H}\int_Hf(gh)\,d_H(h)\,d\mu(\bar{g}),$$
where $d_G$, $d_H$ are the left invariant Haar measures on $G$ and $H$, respectively. Since for any compact set $K\subset G$, $\bar{\mathbbm{1}}_K(\bar{g})=\int_H\mathbbm{1}_K(gh)\,d_H(h)$ is a bounded positive function and $\bar{\mathbbm{1}}_K>0$ iff $\mathbbm{1}_{\pi(K)}>0$  we have that
$$\mu(\pi(K))>0 \;\;\;\; \mbox{ if }\;\;\;\;d_G(K)>0.$$

The {\bf modular function} $\Delta_G$ of a Lie group $G$ is defined as
$$\Delta_G(g)=|\mathrm{det} (\Ad(g))|.$$
By Theorem 8.36 of \cite{Knapp} a quotient space $G/H$ admits a unique (up to scalar) $G$-invariant Borel measure if, and only if, $(\Delta_G)_{|H}=\Delta_H$. 

By considering the Lie subgroup $N$ as above, we have that $\Delta_N=1$, since $N$ is nilpotent. Also by the nilpotency of $N$ we have that for any $g\in N$ there is $X\in\fn$ such that $g=\exp X$. Moreover, by equation (\ref{eigenvalues}) we have that $\ad(X)$ is a nilpotent map which give us that
$$\Delta_G(g)=\Delta_G(\exp X)=|\mathrm{det}\Ad(\exp X))|=|\mathrm{det} (\mathrm{e}^{\ad(X)})|=|\mathrm{e}^{\mathrm{tr}(\ad(X))}|=1.$$
and implies that $(\Delta_G)_{|N}=\Delta_N=1$ and consequently that $G/N$ admits a unique $G$-invariant Borel measure.

\bigskip
With the above we are in conditions to give the main result of this paper.

\begin{theorem}
\label{improved}
Let $(K, Q)$ be an admissible pair of the system (\ref{affinesystem}) such that $d_G(K)>0$ and $Q$ is a compact set. It holds that 
$$h_{\mathrm{inv, out}}(K, Q)=\sum_{\lambda_{\DC^*}>0}\lambda_{\DC^*}$$
where $\lambda_{\DC^*}$ are the real parts of the eigenvalues of $\DC^*$.
\end{theorem}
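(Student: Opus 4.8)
The inequality ``$\le$'' is already contained in Theorem \ref{upperouter}, so the entire content of the statement is the reverse inequality
$$h_{\mathrm{inv, out}}(K, Q)\geq \sum_{\lambda_{\DC^*}>0}\lambda_{\DC^*}=:c.$$
The plan is to project everything onto the homogeneous space $G/N$, where $N$ is the closed connected nilpotent subgroup with Lie algebra $\fn=\bigoplus_{\mathrm{Re}(\beta)<0}\fg_{\beta}$ constructed above. Since $\fn$ is $\DC^*$-invariant, the induced affine system (\ref{inducedaffine}) on $G/N$ is well defined, $(\pi(K),\pi(Q))$ is admissible with $\pi(Q)$ compact, and by the non-increasing property (\ref{nondecrease}) it suffices to prove $h_{\mathrm{inv, out}}(\pi(K),\pi(Q))\geq c$. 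The gain in passing to $G/N$ is twofold: the derivation that $\DC^*$ induces on $\fg/\fn\cong\fg^{+,0}$ retains exactly the eigenvalues of non-negative real part, so its (real) trace is precisely $c$; and $G/N$ carries the $G$-invariant Borel measure $\mu$ built above, with $\mu(\pi(K))>0$ because $d_G(K)>0$.

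The first technical step is a measure-dilation identity for the induced flow. Recall from the end of Section 3 that the solutions of (\ref{inducedaffine}) satisfy $\Phi(t,x,u)=\mathcal{L}_{\phi_{t,u}}(\Lambda_t(x))$ with $\Lambda_t(gN)=\pi(\alpha_t(g))$. Using $\alpha_t=L_{\alpha_t(e)}\circ\psi^*_t$ from (\ref{expressionslinear}) one obtains the factorization
$$\Lambda_t=\mathcal{L}_{\alpha_t(e)}\circ\Psi^*_t,$$
where $\Psi^*_t$ is the flow on $G/N$ induced by the automorphism flow $\psi^*_t$ of $G$. I would show that $\Lambda_\tau$ dilates $\mu$ by the constant factor $\rme^{\tau c}$, i.e. $\mu(\Lambda_\tau(A))=\rme^{\tau c}\mu(A)$ for every Borel set $A$. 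Since $\mathcal{L}_{\alpha_\tau(e)}$ preserves $\mu$, it is enough to treat $\Psi^*_\tau$; as $\Psi^*_\tau$ intertwines the translations $\mathcal{L}_g$, the measure $(\Psi^*_\tau)_*\mu$ is again $G$-invariant, hence a constant multiple of $\mu$ by uniqueness, and evaluating the Jacobian at the base point $\bar e$, where $(d\Psi^*_\tau)_{\bar e}=\rme^{\tau\overline{\DC^*}}$, yields the factor $|\mathrm{det}\,\rme^{\tau\overline{\DC^*}}|=\rme^{\tau\,\mathrm{tr}\,\overline{\DC^*}}=\rme^{\tau c}$.

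The second step is a volume-counting argument. Fix $\varepsilon>0$ and a minimal $(\tau,\varepsilon,\pi(K),\pi(Q))$-spanning set $S$, and for each $u\in S$ set $A_u=\{x\in\pi(K):\Phi(t,x,u)\in N_{\varepsilon}(\pi(Q))\text{ for all }t\in[0,\tau]\}$, so that $\pi(K)=\bigcup_{u\in S}A_u$. Evaluating the spanning condition at $t=\tau$, using $\Phi(\tau,x,u)=\mathcal{L}_{\phi_{\tau,u}}(\Lambda_\tau(x))$ and the $\mu$-invariance of $\mathcal{L}_{\phi_{\tau,u}}$, gives $\Lambda_\tau(A_u)\subset\mathcal{L}_{\phi_{\tau,u}^{-1}}(N_{\varepsilon}(\pi(Q)))$ and hence
$$\rme^{\tau c}\mu(A_u)=\mu(\Lambda_\tau(A_u))\leq\mu(N_{\varepsilon}(\pi(Q))).$$
Summing over $u\in S$ and using the covering of $\pi(K)$ yields $\mu(\pi(K))\leq \#S\cdot\rme^{-\tau c}\mu(N_{\varepsilon}(\pi(Q)))$, so that
$$\frac1\tau\ln r_{\mathrm{inv}}(\tau,\varepsilon,\pi(K),\pi(Q))\geq c+\frac1\tau\ln\frac{\mu(\pi(K))}{\mu(N_{\varepsilon}(\pi(Q)))}.$$
Letting $\tau\to\infty$ the last term vanishes (here $\mu(N_{\varepsilon}(\pi(Q)))$ is finite by compactness of $\pi(Q)$ and $\mu(\pi(K))>0$), giving $h_{\mathrm{inv, out}}(\pi(K),\pi(Q))\geq c$, and with Theorem \ref{upperouter} and (\ref{nondecrease}) the asserted equality follows.

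The main obstacle is the measure-dilation identity: one must verify that $N$ is closed and connected (so that $\mu$ exists), that the Jacobian of $\Psi^*_\tau$ is \emph{globally} constant—which is exactly the content of the intertwining with translations together with uniqueness of the invariant measure—and that the real trace of $\overline{\DC^*}$ on $\fg/\fn$ equals $c$ rather than $\sum_{\mathrm{Re}(\beta)\geq 0}\beta$, the purely imaginary contributions cancelling because $\overline{\DC^*}$ is a real operator. The remaining points, finiteness of $\mu(N_{\varepsilon}(\pi(Q)))$ and positivity of $\mu(\pi(K))$, are routine consequences of the compactness of $Q$ and the hypothesis $d_G(K)>0$.
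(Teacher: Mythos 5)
Your proposal is correct and follows essentially the same route as the paper: reduce to the lower bound, project to $G/N$ with $N$ the closed nilpotent subgroup for the negative-real-part eigenspaces, use the $G$-invariant measure on $G/N$ and the constancy of the Jacobian of $\Psi^*_\tau$ (which the paper derives from the same intertwining relation $\Psi^*_\tau\circ\mathcal{L}_g=\mathcal{L}_{\psi^*_\tau(g)}\circ\Psi^*_\tau$ that you invoke), and finish with the same volume-counting estimate against $\mu(N_\varepsilon(\pi(Q)))$. The only cosmetic differences are that you phrase the dilation via uniqueness of the invariant measure rather than a pointwise Jacobian computation, and you sum the measures of the sets $A_u$ where the paper takes the maximum; both yield the identical bound.
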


\begin{proof}
We just have to prove the inequality
$$h_{\mathrm{inv, out}}(K, Q)\geq\sum_{\lambda_{\DC^*}>0}\lambda_{\DC^*}.$$
Since the affine system (\ref{affinesystem}) on $G$ is $\pi$-related with the affine system (\ref{inducedaffine}) on $G/N$ and $Q$ is compact we have by equation (\ref{nondecrease}) that
$$h_{\mathrm{inv, out}}(K, Q)\geq h_{\mathrm{out, inv}}(\bar{K}, \bar{Q})$$
where $\bar{K}=\pi(K)$ and $\bar{Q}=\pi(Q)$. For any $\tau, \varepsilon>0$ let $S=\{u_1, \ldots, u_k\}\subset\UC$ be a minimal $(\tau, \varepsilon)$-spanning set of the admissible pair $(\bar{K}, \bar{Q})$ and consider the subsets 
$$K_j:=\{x\in \bar{K}; \Phi([0, \tau], x, u_j)\subset N_{\varepsilon}(\bar{Q})\}.$$
Using the contintuity of the map $\Phi_{t, u}$ is easy to see that $K_j$ and $\Phi_{\tau, u_j}(K_j)$ are Borel sets for every $j =1, \ldots, k$. Also $\bar{K}=\cup_jK_j$ and $\Phi_{\tau, u_j}(K_j)\subset N_{\varepsilon}(\bar{Q})$. Consider then the $G$-invariant measure $\mu$ on $G/N$. We have that
$$\mu(\Phi_{\tau, u_j}(K_j))=\mu(\mathcal{L}_{\phi_{\tau, u_j}}(\Lambda_{\tau}(K_j)))=\mu(\Lambda_{\tau}(K_j)).$$
Also, since $\alpha_t(g)=\alpha_t(e)\psi_t^*(g)$ we have that $\Lambda_t(x)=\mathcal{L}_{\alpha_t(e)}(\Psi_t^*(x))$ where $\Psi^*_t=\pi\circ\psi_t^*$ is the flow associated with the linear vector field $f^*=(d\pi)\XC^*$ on $G/N$. We have then that
$$\mu(\Lambda_{\tau}(K_j))=\mu(\mathcal{L}_{\alpha_{\tau}(e)}(\Psi_{\tau}^*(K_j)))=\mu(\Psi_{\tau}^*(K_j))=\int_{K_j} |\det(d\Psi^*_{\tau})_{\bar{g}}|d\mu(\bar{g}).$$

Since for any $g\in G$ the translation $\mathcal{L}_g$ on $G/N$ and $\Psi^*_{\tau}$ satisfies $\Psi^*_{\tau}\circ \mathcal{L}_g=\mathcal{L}_{\psi^*_{\tau}(g)}\circ \Psi^*_{\tau}$ we have that
$$(d\Psi^*_{\tau})_{\bar{g}}=(d\mathcal{L}_{\psi^*_{\tau}(g)})_{N}\circ (d\Psi^*_{\tau})_{N}\circ (d\mathcal{L}_{g^-1})_{\bar{g}}$$
an by the $G$-invariance of $\mu$ that $|\det(d \mathcal{L}_g)_{\bar{h}}|=1$ for any $\bar{h}\in G/N$. Consequently 
$$\det(d\Psi^*_{\tau})_{\bar{g}}=\det(d\Psi^*_{\tau})_{N}.$$
Also, Proposition 3.5 of \cite{DaSilva} gives us that 
$$(d\Psi^*_{\tau})_{N}=\rme^{\tau\DC^*_{|\fg^{+, 0}}}$$
and consequently that
$$\det(d\Psi^*_{\tau})_{\bar{g}}=\det(d\Psi^*_{\tau})_{N}=\det(\mathrm{e}^{\tau\DC^*|\fg^{+, 0}})=\mathrm{e}^{\tau\,\mathrm{tr}(\DC^*_{|\mathfrak{g}^{+, 0}})}=\mathrm{e}^{\tau\left(\sum_{\lambda_{\DC^*}>0}\lambda_{\DC^*}\right)}$$
which gives us that
$$\mu(\Phi_{\tau, u_j}(K_j))=\mathrm{e}^{\tau\left(\sum_{\lambda_{\DC^*}>0}\lambda_{\DC^*}\right)}\mu(K_j).$$

Let $j_0\in\{1, \ldots, k\}$ such that $\mu(K_{j_0})\geq\mu(K_j)$ for $j=1, \ldots, k$. Since $\Phi_{\tau, u_j}(K_j)\subset N_{\varepsilon}(\bar{Q})$ we have $\mu(\Phi_{\tau, u_j}(K_j))\leq \mu(N_{\varepsilon}(\bar{Q}))$ and consequently that
$$\mu(\bar{K})\leq \sum_{j=1}^k\mu(K_j)\leq k\cdot\mu(K_{j_0})=k\cdot\mu(\Phi_{n, u_{j_0}}(K_{j_0}))\,\mathrm{e}^{-\tau\left(\sum_{\lambda_{\DC^*}>0}\lambda_{\DC^*}\right)}$$ 
$$\hspace{4.1cm}\leq k\cdot\mu(N_{\varepsilon}(\bar{Q}))\mathrm{e}^{-\tau\left(\sum_{\lambda_{\DC^*}>0}\lambda_{\DC^*}\right)}$$
which implies that
$$r_{\mathrm{inv}}(\tau, \varepsilon, \bar{K}, \bar{Q})\geq \frac{\mu(\bar{K})}{\mu(N_{\varepsilon}(\bar{Q}))}\,\mathrm{e}^{\tau\left(\sum_{\lambda_{\DC^*}>0}\lambda_{\DC^*}\right)}.$$
Since we are assuming $d_G(K)>0$ we have that $\mu(\bar{K})=\mu(\pi(K))>0$. Also, since $Q$ is a compact set, $\bar{Q}=\pi(Q)$ is also a compact set and consequently, for $\varepsilon>0$ small enough, $\mu(N_{\varepsilon}(\bar{Q}))<\infty$. Dividing by $\tau$, taking the logarithm and the limsup we have then that 
$$\limsup_{\tau\rightarrow\infty}\frac{1}{\tau}\frac{\mu(\bar{K})}{\mu(N_{\varepsilon}(\bar{Q}))}=0$$
which gives us  
$$h_{\mathrm{inv, out}}(\bar{K}, \bar{Q})\geq \sum_{\lambda_{\DC^*}>0}\lambda_{\DC^*}$$
and concludes the proof.
\end{proof}

\bigskip
\begin{remark}
The above results generalize the results in \cite{DaSilva} for linear systems. In fact, if $F$ is linear vector field we have that both linear vector fields $\XC$ and $\XC^*$ coincides with $F$ and consequently $\DC=\DC^*$ which implies
$$h_{\mathrm{inv, out}}(K, Q)=\sum_{\lambda_{\DC}>0}\lambda_{\DC}=\sum_{\lambda_{\DC^*}>0}\lambda_{\DC^*}$$ 
for any admissible pair $(K, Q)$ of the linear system (\ref{linearsystem}).

A case where we have also the equality is in the Euclidean abelian case, because in this case $[,]\equiv 0$. But in general we have that $\DC^*=\DC+\ad(i_*Z)\neq \DC$ which show us that, different from the linear case, the outer invariance entropy of the system is not determined by the derivation $\ad(F)$ associated with the affine vector field $F$.
\end{remark}

\end{document}